\newlength{\extramargin}
\newcommand{\Real}{\ensuremath{{\mathbb{R}}}}
\newcommand{\Complex}{\ensuremath{{\mathbb{C}}}}
\newcommand{\V}{\ensuremath{\mathcal V}}
\newcommand{\E}{\ensuremath{\mathbb E}}
\newcommand{\W}{\ensuremath{\mathcal W}}
\newcommand{\B}{\ensuremath{\mathcal B}}
\newcommand{\one}{\ensuremath{{\mathbf{1}}}}
\newtheorem{theorem}{Theorem}
\newtheorem{lemma}{Lemma}
\newtheorem{definition}{Definition}
\newtheorem{remark}{Remark}
\newenvironment{proof}{\noindent {\bf Proof.}}{\hfill \hspace*{1pt}\hfill$\blacksquare$}
\begin{document}
\title{Synchronization of linear oscillators coupled through dynamic networks with interior nodes}
\author{S. Emre Tuna\footnote{The author is with Department of
Electrical and Electronics Engineering, Middle East Technical
University, 06800 Ankara, Turkey. Email: {\tt etuna@metu.edu.tr}}}
\maketitle

\begin{abstract}
Synchronization is studied in an array of identical linear
oscillators of arbitrary order, coupled through a dynamic network
comprising dissipative connectors (e.g., dampers) and restorative
connectors (e.g., springs). The coupling network is allowed to
contain interior nodes, i.e., those that are not directly connected
to an oscillator. It is shown that the oscillators asymptotically
synchronize if and only if the Schur complement (with respect to the
boundary nodes) of the complex-valued Laplacian matrix representing
the coupling has a single eigenvalue on the imaginary axis.
\end{abstract}

\section{Introduction}

Consider the dynamic coupling network with four nodes, shown in
Fig.~\ref{fig:network}; {\em dynamic} because it contains energy
storage components (inductors). This network can be represented by a
pair of Laplacian matrices $(D,\,R)$ where $D$ locates the
dissipative connectors (i.e., the resistor with conductance
$g_{13}$) and $R$ the restorative connectors (i.e., the inductors
with inductances $\ell_{12},\,\ell_{23},\,\ell_{34}$) as
\begin{eqnarray*}
D=\left[\begin{array}{cccc}
g_{13}&0&-g_{13}&0\\
0&0&0&0\\
-g_{13}&0&g_{13}&0\\
0&0&0&0\\
\end{array}\right]\,,\qquad
R=\left[\begin{array}{cccc}
\ell_{12}^{-1}&-\ell_{12}^{-1}&0&0\\
-\ell_{12}^{-1}&\ell_{12}^{-1}+\ell_{23}^{-1}&-\ell_{23}^{-1}&0\\
0&-\ell_{23}^{-1}&\ell_{23}^{-1}+\ell_{34}^{-1}&-\ell_{34}^{-1}\\
0&0&-\ell_{34}^{-1}&\ell_{34}^{-1}
\end{array}\right]\,.
\end{eqnarray*}
Clearly, the pair $(D,\,R)$ can be fused into a single entity
$[D+jR]$, a complex-valued Laplacian, without any loss of
information. It turns out that this fusion has merits beyond mere
notational convenience: The collective behavior of an array of
coupled oscillators is closely related to the spectral properties of
the complex-valued Laplacian representing the network through which
the oscillators are coupled. Let us elaborate on this point.

\begin{figure}[h]
\begin{center}
\includegraphics[scale=0.4]{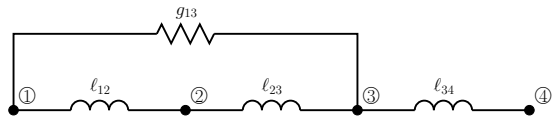}
\caption{Dynamic coupling network with four
nodes.}\label{fig:network}
\end{center}
\end{figure}

Suppose now to the network in Fig.~\ref{fig:network} we connect four
identical harmonic oscillators (LC-tanks) as shown in
Fig.~\ref{fig:networkLC2}. Will these second-order circuits
eventually oscillate in unison? The answer to this question was
given in a recent work \cite{tuna19}, where it was shown that a
coupled array of harmonic oscillators asymptotically synchronize
when (and only when) the coupling matrix $[D+jR]$ has a single
eigenvalue on the imaginary axis. A somewhat interesting feature of
this result is that synchronization (or its absence) is independent
of the characteristic frequency of the harmonic oscillators. This
immediately brings up the question: Does this independence exist as
well for higher-order oscillators, having two or more characteristic
frequencies? which motivates us to conduct the analysis presented in
this paper. To illustrate the setup associated to this question, let
us visit once again the network in Fig.~\ref{fig:network}. Consider
this time the case where we employ our network to couple
fourth-order linear oscillator circuits, each having two
characteristic frequencies. Furthermore, suppose that the oscillator
connected to the third node becomes dysfunctional during operation
and therefore is disconnected from the network (or has never been
connected to the network in the first place). This scenario is
depicted in Fig.~\ref{fig:networkLC4}, where the node
$\raisebox{.5pt}{\textcircled{\raisebox{-.9pt} {3}}}$ is said to be
an {\em interior node}\footnote{This term is borrowed from
\cite{dorfler13}.} of the network, for it is not directly connected
to an oscillator. The simple circuitry shown in
Fig.~\ref{fig:networkLC4} exemplifies the general setup we explore
in this paper: an array of identical linear time-invariant (LTI)
oscillators coupled through an LTI network with interior nodes,
containing not only dissipative components (e.g., resistors,
dampers) but restorative ones (e.g., inductors, springs) as well.
And what we study in this setup is the problem of synchronization.
We state our findings next.

\begin{figure}[h]
\begin{center}
\includegraphics[scale=0.4]{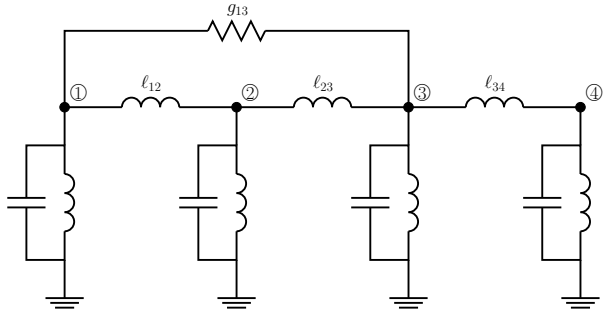}
\caption{Harmonic oscillators coupled through a dynamic
network.}\label{fig:networkLC2}
\end{center}
\end{figure}

For the setup described above (see the next section for the formal
description) we show that what has been established in \cite{tuna19}
for harmonic oscillators is true in general for higher-order
systems. Namely, in the absence of interior nodes, an array of
coupled oscillators asymptotically synchronize if and only if the
complex-valued Laplacian matrix representing the coupling network
has a single eigenvalue on the imaginary axis. For the more general
case, where the network does have interior nodes, this eigenvalue
test remains valid, except for the difference that instead of the
Laplacian itself it has to be applied to the Schur complement of the
Laplacian with respect to the set of non-interior nodes.

\begin{figure}[h]
\begin{center}
\includegraphics[scale=0.4]{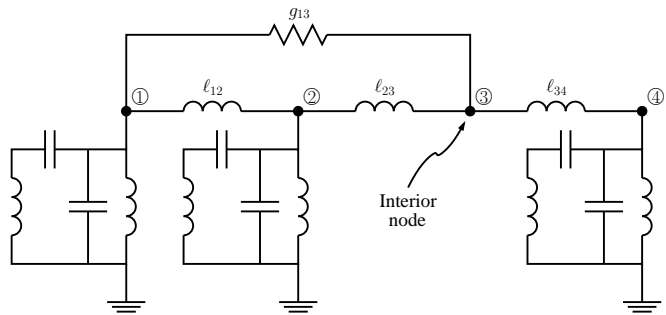}
\caption
{Fourth-order linear oscillators coupled through a network
with an interior node.
%(The node $\raisebox{.5pt}{\textcircled{\raisebox{-.9pt} {3}}}$ is the interior node, for it is not directly connected to an oscillator.)
}\label{fig:networkLC4}
\end{center}
\end{figure}

In order to lay bare the contours of our contribution, we now
attempt to place the above-mentioned main theorem of this paper with
respect to some of the related work in the literature. To this end,
let us impart some details first. The isolated (uncoupled)
oscillator dynamics we study here read \cite[Ch.~11]{lax96}
\begin{eqnarray}\label{eqn:one}
M{\ddot x}_{i}+Kx_{i}=0
\end{eqnarray}
where the constant matrices $M,\,K$ are symmetric positive definite
and $x_{i}$ is a vector. Note that this LTI differential equation is
a natural generalization of the dynamics of an harmonic oscillator,
for which $M$, $K$, and $x_{i}$ are scalar. Even though we have so
far motivated our problem through electrical circuits, the
equation~\eqref{eqn:one} plays an even more important role in
mechanics \cite[Ch.~V]{landau76}. The linearization of a Lagrangian
system about a stable equilibrium enjoys the form~\eqref{eqn:one},
which successfully represents the behavior of the actual system
undergoing small vibrations, e.g., an $n$-link pendulum. This
allows, for instance, the main theorem of this paper to determine
whether an array of mechanical oscillators (undergoing small
vibrations) coupled through a network consisting of springs and
dampers asymptotically synchronize or not; see
Fig.~\ref{fig:pendula3}. To the best of our knowledge, works that
investigate conditions on the coupling network that ensure
synchronization in an array of oscillators with individual
oscillator dynamics~\eqref{eqn:one} are relatively few, save the
special case of harmonic oscillators; see, for instance,
\cite{ren08,su09,zhou12,sun15,tuna17}. A condition for
synchronization relevant to our setup is provided in
\cite[Cor.~14]{tuna19} under the assumptions (i) that the
restorative coupling coefficients are small (e.g., the springs in
Fig.~\ref{fig:pendula3} are weak) and (ii) that the coupling network
contains no interior nodes. In this paper we dispense with both of
these assumptions. To circumvent the difficulty raised by the
interior nodes (which initially pose an obstacle for standard
analysis techniques) we work with the Schur complement
\cite{zhang05} of the complex-valued Laplacian matrix representing
the coupling. This technique (also known as Kron reduction by
engineers) has proven useful in the analysis of electrical networks
\cite{dorfler14,dorfler18,caliskan14}.

\begin{figure}[h]
\begin{center}
\includegraphics[scale=0.45]{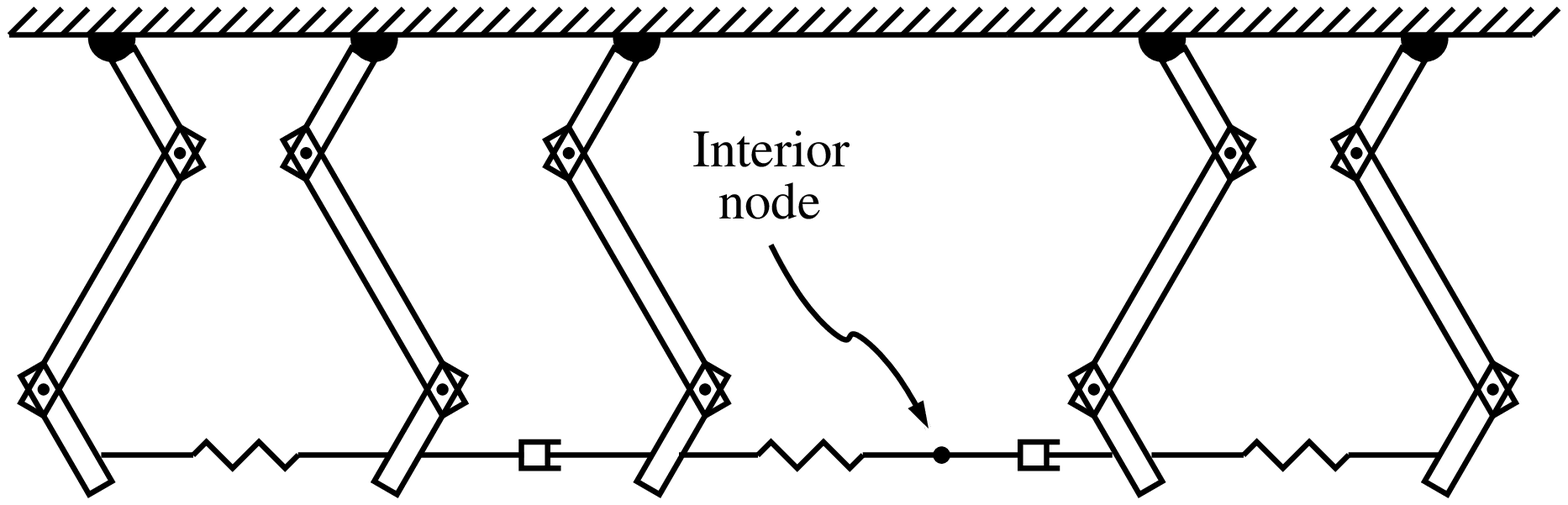}
\caption {Three-link pendulums coupled through a network with an
interior node.}\label{fig:pendula3}
\end{center}
\end{figure}

\section{Problem statement and notation}

Consider the array of $q$ identical oscillators
\begin{eqnarray}\label{eqn:oscillator}
M{\ddot x}_{i}+Kx_{i}=Bu_{i}\,,\quad y_{i}=B^{T}x_{i}\,,\qquad
i=1,\,2,\,\ldots,\,q\,;
\end{eqnarray}
where $x_{i}\in\Real^{n}$; $u_{i},\,y_{i}\in\Real$, the matrices
$M,\,K\in\Real^{n\times n}$ are symmetric positive definite, and
$B\in\Real^{n\times 1}$. We assume that the dynamics $M{\ddot
x}_{i}+Kx_{i}=Bu_{i}$ are controllable through $u_{i}$. Note that
this is equivalent to assuming observability from
$y_{i}=B^{T}x_{i}$. Or, more formally,
\begin{eqnarray}\label{eqn:obs}
{\rm
rank}\left[\begin{array}{c}K-\omega^{2}M\\B^{T}\end{array}\right]=n\quad\mbox{for
all}\quad \omega\in\Real_{>0}\,.
\end{eqnarray}
We study the setup where these oscillators are coupled through a
network with $p\geq q$ nodes, subject to the following constraints
\begin{subeqnarray}\label{eqn:coupling}
u_{i}+\sum_{j=1}^{p}d_{ij}({\dot z}_{i}-{\dot
z}_{j})+\sum_{j=1}^{p}r_{ij}(z_{i}-z_{j})&=&0\,,\qquad
i=1,\,2,\,\ldots,\,q\,;\\
\sum_{j=1}^{p}d_{ij}({\dot z}_{i}-{\dot
z}_{j})+\sum_{j=1}^{p}r_{ij}(z_{i}-z_{j})&=&0\,, \qquad
i=q+1,q+2,\,\,\ldots,\,p\,;\\
z_{i}-y_{i}&=&0\,, \qquad i=1,\,2,\,\ldots,\,q\,;
\end{subeqnarray}
where the indices $i=1,\,2,\,\ldots,\,q$ correspond to the
non-interior nodes (sometimes called the {\em boundary} nodes) and
$i=q+1,q+2,\,\,\ldots,\,p$ to the interior nodes. Also,
$z_{i}\in\Real$, the scalars $d_{ij}=d_{ji}\geq 0$ represent the
dissipative coupling, and the scalars $r_{ij}=r_{ji}\geq 0$
represent the restorative coupling. (We take $d_{ii}=0$ and
$r_{ii}=0$.) In this paper we tackle and propose a solution to the
following problem:

\vspace{0.12in}

\noindent{\bf Problem.} Find conditions on the set of parameters
$(M,\,K,\,B,\,(d_{ij})_{i,j=1}^{p},\,(r_{ij})_{i,j=1}^{p})$ under
which the array~\eqref{eqn:oscillator} coupled through the
constraints~\eqref{eqn:coupling} {\em synchronizes}, i.e.,
$\|x_{i}(t)-x_{j}(t)\|\to 0$ as $t\to\infty$ for all indices
$i,\,j$.

\vspace{0.12in}

Further notation. The identity matrix is denoted by
$I_{q}\in\Real^{q\times q}$. We let $\one_{q}\in\Real^{q}$ denote
the vector of all ones. Given $X\in\Complex^{q\times q}$, we let
$\lambda_{k}(X)$ denote the $k$th smallest eigenvalue of $X$ with
respect to the real part. That is, ${\rm Re}\,\lambda_{1}(X)\leq{\rm
Re}\,\lambda_{2}(X)\leq\cdots\leq{\rm Re}\,\lambda_{q}(X)$. All the
positive (semi)definite matrices we consider in this paper will be
(real and) symmetric. Therefore henceforth we write $X>0$ ($X\geq
0$) to mean $X^{T}=X>0$ ($X^{T}=X\geq 0$). A simple fact from linear
algebra that we frequently use in our analysis is
\begin{eqnarray*}
X\geq 0\ \ \mbox{and}\ \ \xi^{*}X\xi=0\implies X\xi=0
\end{eqnarray*}
where $\xi$ is a vector of appropriate size and $\xi^{*}$ is its
conjugate transpose. Given a set of real (scalar) weights
$(w_{ij})_{i,j=1}^{q}$ with $w_{ij}=w_{ji}\geq 0$ and $w_{ii}=0$,
the associated Laplacian matrix is defined as
\begin{eqnarray*}
L =
\left[\begin{array}{cccc}\sum_{j}w_{1j}&-w_{12}&\cdots&-w_{1q}\\
-w_{21}&\sum_{j}w_{2j}&\cdots&-w_{2q}\\
\vdots&\vdots&\ddots&\vdots\\
-w_{q1}&-w_{q2}&\cdots&\sum_{j}w_{qj}
\end{array}\right]=:{\rm lap}\,(w_{ij})_{i,j=1}^{q}\,.
\end{eqnarray*}
Observe the symmetry $L=L^{T}$ and the positive semidefiniteness
$\xi^{*}L\xi=\sum_{j>i}w_{ij}|\xi_{i}-\xi_{j}|^{2}\geq 0$, where
$\xi=[\xi_{1}\ \xi_{2}\ \cdots\ \xi_{q}]^{T}\in\Complex^{q}$. Also,
we have $L\one_{q}=0$.

\section{Schur complement}

Consider the network~\eqref{eqn:coupling}. Note that the dissipative
coupling $(d_{ij})_{i,j=1}^{p}$ and the restorative coupling
$(r_{ij})_{i,j=1}^{p}$ can be represented, respectively, by the
following pair of $p\times p$ Laplacians
\begin{eqnarray}\label{eqn:DandR}
D={\rm lap}\,(d_{ij})_{i,j=1}^{p}\quad \mbox{and}\quad R={\rm
lap}\,(r_{ij})_{i,j=1}^{p}\,.
\end{eqnarray}
As we mentioned earlier, when there are no interior nodes, i.e.,
when $p=q$, one has to check the number of eigenvalues of the matrix
$[D+jR]$ on the imaginary axis to determine whether the
array~\eqref{eqn:oscillator} coupled through the
constraints~\eqref{eqn:coupling} synchronizes or not. (This is yet
to be shown.) If there are interior nodes, however, i.e., if $p>q$,
then this eigenvalue test should be applied to the Schur complement
of $[D+jR]$. Playing such an essential role in our solution
approach, it is worthwhile that we provide the formal definition of
Schur complement and introduce some of its properties relevant for
the analysis to follow.

\begin{definition}
Given matrices $X\in\Complex^{p\times p}$ and $Y\in\Complex^{q\times
q}$ with $p\geq q$, the matrix $Y$ is said to be a {\em Schur
complement} of $X$ if it satisfies
\begin{eqnarray}\label{eqn:schur}
X\left[\begin{array}{c}I_{q}\\
E\end{array}\right]=\left[\begin{array}{c}Y\\
0\end{array}\right]
\end{eqnarray}
for some $E\in\Complex^{(p-q)\times q}$. When \eqref{eqn:schur}
holds we write $Y={\rm schur}\,(X,\,q)$.
\end{definition}

The following Schur complements will be of key importance for our
analysis:
\begin{eqnarray*}
\Gamma&:=&{\rm schur}\,(D+jR,\,q)\,,\label{eqn:Gamma}\\
\Lambda&:=&{\rm schur}\,(R,\,q)\,.\label{eqn:Lambda}
\end{eqnarray*}
We now investigate certain properties of these matrices.

\begin{lemma}\label{lem:Gamma}
The matrix $\Gamma\in\Complex^{q\times q}$ uniquely exists.
Moreover, it satisfies
\begin{enumerate}
\item $\Gamma^{T}=\Gamma$,
\item ${\rm Re}\,\lambda_{1}(\Gamma)\geq 0$ and ${\rm Im}\,\lambda_{k}(\Gamma)\geq
0$ for all $k$,
\item $\Gamma \one_{q}=0$.
\end{enumerate}
\end{lemma}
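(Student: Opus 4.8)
The plan is to work with the conformal block partition
\[
D+jR=\begin{bmatrix}A&B\\ C&F\end{bmatrix},
\]
in which $F=D_{22}+jR_{22}\in\Complex^{(p-q)\times(p-q)}$ is the interior block and $A=D_{11}+jR_{11}$, $B=D_{12}+jR_{12}$, $C=D_{21}+jR_{21}$ assemble the corresponding blocks of $D$ and $R$. Since $D$ and $R$ are symmetric, $D+jR$ is complex symmetric, giving $A^{T}=A$, $F^{T}=F$, and $C=B^{T}$. Reading off \eqref{eqn:schur}, constructing $\Gamma$ amounts to finding $E$ with $FE=-C$ and setting $\Gamma=A+BE$. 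The first job is to secure existence and uniqueness without assuming $F$ invertible. I would first identify $\ker F=\ker D_{22}\cap\ker R_{22}$: if $Fw=0$, then $w^{*}Fw=w^{*}D_{22}w+j\,w^{*}R_{22}w=0$ forces the two nonnegative reals $w^{*}D_{22}w$ and $w^{*}R_{22}w$ to vanish, and the stated fact ($X\geq0$, $\xi^{*}X\xi=0\Rightarrow X\xi=0$) gives $D_{22}w=R_{22}w=0$. Embedding such a $w$ as $\begin{bmatrix}0\\ w\end{bmatrix}$ and invoking $D,R\geq0$ yields $D_{12}w=R_{12}w=0$, hence $Bw=0$; applying the same reasoning to $F^{*}=D_{22}-jR_{22}$ shows $\ker F^{*}=\ker F$, so that ${\rm range}\,F=(\ker F)^{\perp}$ and every column of $C$ is orthogonal to $\ker F$. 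Consistency of $FE=-C$ (existence) follows, while any two solutions differ by columns lying in $\ker F$, which $B$ annihilates, so $\Gamma=A+BE$ is independent of the chosen $E$ (uniqueness).

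For claim~1 I would sidestep inverting $F$ with a sandwich identity: since $FE=-B^{T}$ and $F^{T}=F$, we have $E^{T}FE=-E^{T}B^{T}$ on one hand and, using $(FE)^{T}=E^{T}F$, also $E^{T}FE=-BE$; equating the two shows $BE=(BE)^{T}$, whence $\Gamma=A+BE$ is symmetric. For claim~3 I would use that Laplacians have zero row sums, so $(D+jR)\one_{p}=0$; partitioning gives $A\one_{q}+B\one_{p-q}=0$ and $F\one_{p-q}=-C\one_{q}$. Then $FE\one_{q}=-C\one_{q}=F\one_{p-q}$ places $E\one_{q}-\one_{p-q}$ in $\ker F$, which $B$ kills, so $\Gamma\one_{q}=A\one_{q}+BE\one_{q}=A\one_{q}+B\one_{p-q}=0$.

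Claim~2 is the heart of the matter, and the key device is to transport the quadratic form of $\Gamma$ back onto $D+jR$. For arbitrary $\xi\in\Complex^{q}$, set $\eta:=\begin{bmatrix}\xi\\ E\xi\end{bmatrix}$; multiplying \eqref{eqn:schur} on the right by $\xi$ gives $(D+jR)\eta=\begin{bmatrix}\Gamma\xi\\ 0\end{bmatrix}$, so that $\xi^{*}\Gamma\xi=\eta^{*}(D+jR)\eta=\eta^{*}D\eta+j\,\eta^{*}R\eta$. As $D\geq0$ and $R\geq0$, the real part $\eta^{*}D\eta$ and the imaginary part $\eta^{*}R\eta$ are both nonnegative, so the numerical range of $\Gamma$ sits in the closed first quadrant; since every eigenvalue lies in the numerical range, this yields ${\rm Im}\,\lambda_{k}(\Gamma)\geq0$ for all $k$ and ${\rm Re}\,\lambda_{1}(\Gamma)\geq0$ (indeed ${\rm Re}\,\lambda_{k}(\Gamma)\geq0$ for every $k$). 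The step I expect to be most delicate is the existence/uniqueness argument, where $F$ can genuinely be singular---precisely when some interior nodes are unanchored to the boundary through the combined coupling---and the null-space identifications above are what let me avoid any invertibility hypothesis; by contrast, the $\eta$-substitution makes claim~2 fall out almost for free.
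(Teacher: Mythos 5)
Your proof is correct, and while it shares the paper's two central devices---embedding a kernel vector $w$ of the interior block as $\left[\begin{smallmatrix}0\\ w\end{smallmatrix}\right]$ so as to exploit the fact $X\geq 0,\ \xi^{*}X\xi=0\Rightarrow X\xi=0$, and transporting the quadratic form of $\Gamma$ back onto $D+jR$ via $\eta=\left[\begin{smallmatrix}\xi\\ E\xi\end{smallmatrix}\right]$---you organize the remaining claims around a genuinely different mechanism. You distill the single structural fact $\ker F=\ker F^{*}=\ker D_{22}\cap\ker R_{22}\subset\ker B$ and let it do triple duty: consistency of $FE=-C$ (via ${\rm range}\,F=(\ker F)^{\perp}$), uniqueness (two solutions differ by columns in $\ker F$, which $B$ annihilates---this in fact shows $BE$ itself is well defined, slightly more than needed), and $\Gamma\one_{q}=0$ (since $E\one_{q}-\one_{p-q}\in\ker F$). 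The paper instead routes uniqueness and the row-sum property through the symmetry of $D+jR$: it first obtains $\Gamma^{T}=\Gamma$ from the congruence identity $\Gamma=[I_{q}\ \ E^{T}][D+jR]\left[\begin{smallmatrix}I_{q}\\ E\end{smallmatrix}\right]$, then gets uniqueness by the transpose juggling $\Gamma_{1}=[I_{q}\ \ E_{2}^{T}][D+jR]\left[\begin{smallmatrix}I_{q}\\ E_{1}\end{smallmatrix}\right]=\Gamma_{2}^{T}=\Gamma_{2}$, and proves $\Gamma\one_{q}=\Gamma^{T}\one_{q}=[I_{q}\ \ E^{T}][D+jR]^{T}\one_{p}=0$; your kernel route never invokes $\Gamma^{T}=\Gamma$ except in claim~1 itself, whereas the paper's is shorter once symmetry is in hand. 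For claim~2 your numerical-range phrasing subsumes the paper's direct eigenvector computation (the paper takes a unit eigenvector $v$ and evaluates $\lambda=\zeta^{*}D\zeta+j\zeta^{*}R\zeta$ with $\zeta=\left[\begin{smallmatrix}v\\ Ev\end{smallmatrix}\right]$), and your symmetry argument via $E^{T}FE$ computed two ways is a valid, if slightly longer, substitute for the paper's one-line congruence. One pedantic point you should make explicit: ``every column of $C$ is orthogonal to $\ker F$'' is a Hermitian statement ($w^{*}C=0$ for $w\in\ker F$), whereas $Bw=0$ gives only the bilinear $w^{T}C=0$; the gap closes because you proved $D_{12}w=R_{12}w=0$ with $D_{12},R_{12}$ real and $\ker F=\ker D_{22}\cap\ker R_{22}$ closed under conjugation, so $\overline{w}\in\ker F$ and hence $w^{*}C=\overline{w}^{T}C=0$.
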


\begin{proof}
{\em Existence.} Let us partition the matrices $D$ and $R$ as
\begin{eqnarray}\label{eqn:blocks}
D=\left[\begin{array}{cc}U_{1}&V_{1}\\V_{1}^{T}&W_{1}\end{array}\right]\quad\mbox{and}\quad
R=\left[\begin{array}{cc}U_{2}&V_{2}\\V_{2}^{T}&W_{2}\end{array}\right]
\end{eqnarray}
where $U_{k}\in\Real^{q\times q}$, $V_{k}\in\Real^{q\times (p-q)}$,
and $W_{k}\in\Real^{(p-q)\times (p-q)}$ for $k=1,\,2$. Note that
$D,\,R\geq 0$ implies $W_{k}\geq 0$. By definition we have
\begin{eqnarray}\label{eqn:gamdef}
\left[\begin{array}{c}\Gamma
\\0\end{array}\right]&=&[D+jR]\left[\begin{array}{c} I_{q}
\\E\end{array}\right]\\
&=&\left[\begin{array}{cc}U_{1}+jU_{2}&V_{1}+jV_{2}\\V_{1}^{T}+jV_{2}^{T}&W_{1}+jW_{2}\end{array}\right]\left[\begin{array}{c} I_{q}
\\E\end{array}\right]\,.\nonumber
\end{eqnarray}
Hence $\Gamma$ exists if the equation
\begin{eqnarray*}
[V_{1}^{T}+jV_{2}^{T}]+[W_{1}+jW_{2}]E=0
\end{eqnarray*}
admits a solution $E\in\Complex^{(p-q)\times q}$, which is possible
when ${\rm range}\,[W_{1}+jW_{2}]\supset{\rm
range}\,[V_{1}^{T}+jV_{2}^{T}]$ or, equivalently,
\begin{eqnarray}\label{eqn:null}
{\rm null}\,[W_{1}-jW_{2}]\subset{\rm null}\,[V_{1}-jV_{2}]\,.
\end{eqnarray}
To establish the relation~\eqref{eqn:null} suppose otherwise. Then
there would exist $\eta\in\Complex^{p-q}$ satisfying
$[W_{1}-jW_{2}]\eta=0$ while $[V_{1}-jV_{2}]\eta\neq 0$. We can
write
$0=\eta^{*}[W_{1}-jW_{2}]\eta=\eta^{*}W_{1}\eta-j\eta^{*}W_{2}\eta$.
This allows us to see $\eta^{*}W_{k}\eta=0$ (in fact $W_{k}\eta=0$)
since $W_{k}\geq 0$. Now, let us construct the vector
$\zeta=[0_{1\times q}\ \eta^{T}]^{T}\in\Complex^{p}$. Using $\zeta$
we obtain
\begin{eqnarray*}
\zeta^{*}D\zeta+j\zeta^{*}R\zeta=[0\ \
\eta^{*}]\left[\begin{array}{cc}U_{1}+jU_{2}&V_{1}+jV_{2}\\V_{1}^{T}+jV_{2}^{T}&W_{1}+jW_{2}\end{array}\right]\left[\begin{array}{c}
0
\\\eta\end{array}\right]
=\eta^{*}W_{1}\eta+j\eta^{*}W_{2}\eta =0\,.
\end{eqnarray*}
Since $D,\,R\geq 0$ this implies $D\zeta=0$ and $R\zeta=0$. By
writing
\begin{eqnarray*}
0=D\zeta=\left[\begin{array}{cc}U_{1}&V_{1}\\V_{1}^{T}&W_{1}\end{array}\right]\left[\begin{array}{c}
0
\\\eta\end{array}\right]=\left[\begin{array}{c}
V_{1}\eta
\\W_{1}\eta\end{array}\right]
\end{eqnarray*}
we see that $V_{1}\eta=0$. Similarly, $R\zeta=0$ yields
$V_{2}\eta=0$. But this means $[V_{1}-jV_{2}]\eta=0$.

{\em Symmetry.} By \eqref{eqn:gamdef} we can write
\begin{eqnarray*}
\Gamma = [I_{q}\ \ E^{T}]\left[\begin{array}{c}\Gamma
\\0\end{array}\right]=[I_{q}\ \ E^{T}][D+jR]\left[\begin{array}{c} I_{q}
\\E\end{array}\right]
\end{eqnarray*}
which makes it clear that $\Gamma^{T}=\Gamma$ since $[D+jR]$ is
symmetric.

{\em Uniqueness.} Suppose not. Then we could find
$E_{1},\,E_{2}\in\Complex^{(p-q)\times q}$ satisfying
\begin{eqnarray*}
\left[\begin{array}{c}\Gamma_{1}
\\0\end{array}\right]=[D+jR]\left[\begin{array}{c} I_{q}
\\E_{1}\end{array}\right]\quad\mbox{and}\quad \left[\begin{array}{c}\Gamma_{2}
\\0\end{array}\right]=[D+jR]\left[\begin{array}{c} I_{q}
\\E_{2}\end{array}\right]
\end{eqnarray*}
and yielding $\Gamma_{1}\neq\Gamma_{2}$. But, since
$[D+jR]=[D+jR]^{T}$, this leads to the contradiction:
\begin{eqnarray*}
\Gamma_{1}=[I_{q}\ \ E_{2}^{T}]\left[\begin{array}{c}\Gamma_{1}
\\0\end{array}\right]=[I_{q}\ \ E_{2}^{T}][D+jR]\left[\begin{array}{c} I_{q}
\\E_{1}\end{array}\right]=[\Gamma_{2}^{T}\ \ 0]\left[\begin{array}{c} I_{q}
\\E_{1}\end{array}\right]=\Gamma_{2}^{T}=\Gamma_{2}\,.
\end{eqnarray*}

{\em That ${\rm Re}\,\lambda_{1}(\Gamma)\geq 0$ and ${\rm
Im}\,\lambda_{k}(\Gamma)\geq 0$ for all $k$.} Let
$\lambda\in\Complex$ be an eigenvalue of $\Gamma$ with eigenvector
$v\in\Complex^q$. Without loss of generality suppose $v^{*}v=1$. Let
us construct the vector $\zeta = [v^{T}\
(Ev)^{T}]^{T}\in\Complex^{p}$ where $E$ satisfies
\eqref{eqn:gamdef}. We can write
\begin{eqnarray*}
\lambda=[v^{*}\ \ (Ev)^{*}]\left[\begin{array}{c} \lambda v
\\ 0\end{array}\right]
=\zeta^{*}\left[\begin{array}{c} \Gamma
\\ 0\end{array}\right]v=\zeta^{*}[D+jR]\left[\begin{array}{c} I_{q}
\\E\end{array}\right]v=\zeta^{*}[D+jR]\zeta=\zeta^{*}D\zeta+j\zeta^{*}R\zeta\,.
\end{eqnarray*}
Since $D,\,R\geq 0$ we have ${\rm Re}\,\lambda=\zeta^{*}D\zeta\geq
0$ and ${\rm Im}\,\lambda=\zeta^{*}R\zeta\geq 0$.

{\em That $\Gamma \one_{q}=0$.} Recall that $D\one_{p}=R\one_{p}=0$.
By the symmetry of the matrices $D,\,R,\,\Gamma$ and using
\eqref{eqn:gamdef} we can write
\begin{eqnarray*}
\Gamma\one_{q}=\Gamma^{T}\one_{q}=[\Gamma^{T}\ \ 0]\one_{p}=[I_{q}\
\ E^{T}][D+jR]^{T}\one_{p}=[I_{q}\ \ E^{T}][D\one_{p}+jR\one_{p}]=0
\end{eqnarray*}
which was to be shown.
\end{proof}

\begin{lemma}\label{lem:Lambda}
The matrix $\Lambda\in\Real^{q\times q}$ uniquely exists. Moreover,
it is symmetric positive semidefinite and satisfies
 $\Lambda\one_{q}=0$.
\end{lemma}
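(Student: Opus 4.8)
The plan is to mirror the proof of Lemma~\ref{lem:Gamma} almost line for line, with the complex Laplacian $[D+jR]$ replaced throughout by the single real Laplacian $R$; the one genuinely new ingredient, positive semidefiniteness, I would supply separately. For \emph{existence} I would partition $R$ as in \eqref{eqn:blocks}, so that $R=\left[\begin{array}{cc}U_{2}&V_{2}\\V_{2}^{T}&W_{2}\end{array}\right]$ and the defining relation $R\,[\,I_{q}^{T}\ E^{T}]^{T}=[\,\Lambda^{T}\ 0]^{T}$ reduces to solvability of $V_{2}^{T}+W_{2}E=0$, i.e. to the inclusion ${\rm null}\,W_{2}\subset{\rm null}\,V_{2}$. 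This is exactly the real-part analogue of \eqref{eqn:null}, obtained by the same device already used for $\Gamma$: if $W_{2}\eta=0$ then $\eta^{*}W_{2}\eta=0$, so the padded vector $\zeta=[0_{1\times q}\ \eta^{T}]^{T}\in\Complex^{p}$ satisfies $\zeta^{*}R\zeta=\eta^{*}W_{2}\eta=0$; since $R\geq 0$ this forces $R\zeta=0$ and hence $V_{2}\eta=0$. Because $R$, $V_{2}$, $W_{2}$ are all real, the solution $E$ may be taken real, so that $\Lambda=U_{2}+V_{2}E$ is a real matrix.

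\emph{Symmetry}, \emph{uniqueness}, and the identity $\Lambda\one_{q}=0$ then carry over essentially unchanged. Writing $\Lambda=[I_{q}\ \ E^{T}]\,R\,[I_{q}\ \ E^{T}]^{T}$ shows $\Lambda^{T}=\Lambda$ from $R=R^{T}$, and the uniqueness argument of Lemma~\ref{lem:Gamma} applies verbatim with $[D+jR]$ replaced by $R$. Since uniqueness is established over $\Complex$ while the $\Lambda$ just constructed is real, $\Lambda$ is \emph{the} (and therefore a real) Schur complement, which settles the claim $\Lambda\in\Real^{q\times q}$. Finally $\Lambda\one_{q}=\Lambda^{T}\one_{q}=[\Lambda^{T}\ 0]\one_{p}=[I_{q}\ \ E^{T}]R^{T}\one_{p}=[I_{q}\ \ E^{T}]R\one_{p}=0$, using $R\one_{p}=0$, reproducing the last computation of Lemma~\ref{lem:Gamma}.

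The only step with no counterpart in Lemma~\ref{lem:Gamma} is positive semidefiniteness, and here the argument specializes rather than copies: instead of locating eigenvalues I would test the quadratic form directly. For an arbitrary $\xi\in\Real^{q}$, set $\zeta=[\xi^{T}\ (E\xi)^{T}]^{T}$, so that $[I_{q}\ \ E^{T}]^{T}\xi=\zeta$ and hence $\xi^{T}\Lambda\xi=\zeta^{T}R\zeta\geq 0$ because $R\geq 0$; this is precisely $\Lambda\geq 0$. I do not expect any real obstacle in this lemma: it is a degenerate ($D=0$, purely real) instance of reasoning already in place, and the congruence $\xi^{T}\Lambda\xi=\zeta^{T}R\zeta$ transfers the semidefiniteness of $R$ to $\Lambda$ in a single line.
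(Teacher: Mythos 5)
Your proof is correct, and its skeleton---existence via the partition \eqref{eqn:blocks} and the inclusion ${\rm null}\,W_{2}\subset{\rm null}\,V_{2}$, uniqueness and $\Lambda\one_{q}=0$ lifted from Lemma~\ref{lem:Gamma}, semidefiniteness via a congruence with $R$---is exactly how the paper proceeds; indeed the paper's own proof is little more than a pointer to Lemma~\ref{lem:Gamma} plus one congruence identity. Where you genuinely diverge is in how realness and positive semidefiniteness are obtained. The paper keeps $E\in\Complex^{(p-q)\times q}$ arbitrary and writes $\Lambda=[I_{q}\ \ E^{*}]\,R\,\bigl[\begin{smallmatrix}I_{q}\\ E\end{smallmatrix}\bigr]$, a $*$-congruence that yields Hermitian positive semidefiniteness in one stroke; realness is then implicit (it follows from $\Lambda^{*}=\Lambda$ combined with $\Lambda^{T}=\Lambda$, a step the paper does not spell out). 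You instead establish realness first---observing that the real system $V_{2}^{T}+W_{2}E=0$ admits a real solution (take the real part of any complex one), so that by uniqueness the Schur complement equals the real matrix $U_{2}+V_{2}E$---and only then test the real quadratic form $\xi^{T}\Lambda\xi=\zeta^{T}R\zeta$ with $\zeta=[\xi^{T}\ (E\xi)^{T}]^{T}$. Your route is slightly longer but more self-contained: it makes the claim $\Lambda\in\Real^{q\times q}$ an explicit consequence of uniqueness rather than of the unstated Hermitian-plus-symmetric-implies-real argument, and your ordering is important and correctly respected---the transpose-congruence test $\xi^{T}\Lambda\xi=\zeta^{T}R\zeta\geq 0$ is only valid once $E$ (hence $\zeta$) is known to be real, whereas with complex $E$ one must use $E^{*}$ as the paper does.
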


\begin{proof}
Existence, uniqueness, and that $\Lambda\one_{q}=0$ can be
established through the steps taken in the proof of
Lemma~\ref{lem:Gamma}. To show that $\Lambda$ is real, symmetric
positive semidefinite we note that it satisfies
\begin{eqnarray}\label{eqn:lamdef}
\left[\begin{array}{c}\Lambda
\\0\end{array}\right]=R\left[\begin{array}{c} I_{q}
\\E\end{array}\right]
\end{eqnarray}
for some $E\in\Complex^{(p-q)\times q}$. This allows us to write
\begin{eqnarray*}
\Lambda=[I_{q}\ \ E^{*}]\left[\begin{array}{c}\Lambda
\\0\end{array}\right]=[I_{q}\ \ E^{*}]R\left[\begin{array}{c} I_{q}
\\E\end{array}\right]\,.
\end{eqnarray*}
Hence $\Lambda\geq 0$ because $R\geq 0$.
\end{proof}

\begin{remark}
The matrices $\Gamma$ and $\Lambda$ can be computed through
\begin{eqnarray*}
\Gamma&=&[U_{1}+jU_{2}]-[V_{1}+jV_{2}][W_{1}+jW_{2}]^{+}[V_{1}+jV_{2}]^{T}\\
\Lambda&=&U_{2}-V_{2}W_{2}^{+}V_{2}^{T}
\end{eqnarray*}
where the blocks $U_{k},\,V_{k},\,W_{k}$ are as defined in
\eqref{eqn:blocks} and $[\,\cdot\,]^{+}$ denotes the pseudoinverse.
\end{remark}

Now we are ready to present the main result of this paper.

\begin{theorem}\label{thm:main}
The array~\eqref{eqn:oscillator} coupled through the
constraints~\eqref{eqn:coupling} synchronizes if and only if ${\rm
Re}\,\lambda_{2}(\Gamma)>0$.
\end{theorem}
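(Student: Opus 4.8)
The plan is to reduce the synchronization question to a spectral/modal analysis of a single finite-dimensional matrix pencil, then connect its eigenstructure to the spectrum of $\Gamma$. The starting point is to assemble the coupled dynamics into one big LTI system. Stacking the constraints~\eqref{eqn:coupling}, one eliminates the interior-node variables $z_{q+1},\dots,z_p$ by exactly the Schur-complement operation that defines $\Gamma$ and $\Lambda$: the boundary outputs $y_i=B^Tx_i$ drive the network, and solving the interior algebraic/differential constraints feeds back $u_i$ in terms of $\dot z$ and $z$ at the boundary. Concretely, I expect the closed loop to take the form $(I_q\otimes M)\ddot X+(\Gamma_D\otimes BB^T)\dot{(\,\cdot\,)}+\cdots+(I_q\otimes K)X=0$, where the real and imaginary parts of $\Gamma=\mathrm{schur}(D+jR,q)$ supply the effective dissipative and restorative coupling among the boundary oscillators, with $X=[x_1^T\cdots x_q^T]^T$. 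The whole point of Section~3 is that after Kron reduction the interior nodes disappear and the surviving coupling is governed entirely by $\Gamma$ (and, for the conservative part, $\Lambda=\mathrm{Im}\,\Gamma$).

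\medskip
\noindent\textbf{First} I would diagonalize $\Gamma$. By Lemma~\ref{lem:Gamma} it is complex-symmetric with $\Gamma\one_q=0$, $\mathrm{Re}\,\lambda_1=0$, and $\mathrm{Im}\,\lambda_k\ge 0$; the hypothesis $\mathrm{Re}\,\lambda_2(\Gamma)>0$ singles out a one-dimensional kernel direction $\one_q$ with all other eigenvalues having strictly positive real part. The natural move is to change coordinates so that the synchronization subspace $\{X:x_1=\cdots=x_q\}$ (spanned by $\one_q\otimes I_n$) decouples from its complement. In these coordinates the dynamics split into (a) a motion \emph{along} $\one_q$, which is a single undamped oscillator $M\ddot s+Ks=0$ and carries the common synchronous trajectory, and (b) the transverse error dynamics governing $x_i-x_j$. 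Synchronization is then precisely asymptotic stability of the transverse block, so the theorem reduces to showing: the transverse dynamics are asymptotically stable $\iff \mathrm{Re}\,\lambda_2(\Gamma)>0$.

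\medskip
\noindent\textbf{For the transverse stability} I would pass to the frequency domain and test for imaginary-axis roots. A transverse mode $x(t)=e^{j\omega t}\xi$ with $\xi$ in an eigenspace of $\Gamma$ for eigenvalue $\lambda=\alpha+j\beta$ ($\alpha=\mathrm{Re}\,\lambda$, $\beta=\mathrm{Im}\,\lambda$) produces the scalar pencil condition $\big(K-\omega^2 M\big)\xi + (j\omega\,\alpha+\beta)\,BB^T\xi = 0$ when projected appropriately. The forward direction ($\Rightarrow$) is the contrapositive: if some $\lambda_k\neq 0$ had $\mathrm{Re}\,\lambda_k=0$, i.e. $\alpha=0$, then the dissipation vanishes on that mode and one constructs a sustained (non-decaying) transverse oscillation, so synchronization fails. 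The converse ($\Leftarrow$) is where I expect the real work: one must rule out \emph{any} imaginary-axis root of the transverse characteristic equation. Here the hypothesis $\alpha=\mathrm{Re}\,\lambda_k>0$ forces $\mathrm{Re}$ of the scalar coupling term to be positive at $\omega\neq 0$, and the controllability/observability assumption~\eqref{eqn:obs}---which guarantees $B^T\xi\neq 0$ whenever $(K-\omega^2M)\xi=0$---prevents the coupling from being ``invisible'' to a would-be undamped mode. Taking the appropriate Hermitian form $\xi^*(K-\omega^2M)\xi$ real and isolating the imaginary part should yield $\alpha\,\omega\,|B^T\xi|^2=0$, which with $\alpha>0$, $\omega>0$, and~\eqref{eqn:obs} is contradictory; the $\omega=0$ case is handled by positive definiteness of $K$.

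\medskip
\noindent\textbf{The main obstacle} I anticipate is twofold. First, $\Gamma$ is complex-symmetric rather than Hermitian, so it need not be diagonalizable and may carry nontrivial Jordan blocks; the clean modal decomposition above must be replaced by an argument that survives defective eigenvalues (e.g. working with invariant subspaces, or showing that a Jordan chain still cannot sit on the imaginary axis because any generalized eigenvector would propagate the same vanishing-dissipation obstruction). Second, tightly coupling the eigenvalue location of $\Gamma$ to the imaginary-axis roots of the full second-order pencil requires care: each scalar eigenvalue $\lambda_k$ spawns an $n$-dimensional oscillator block, and one must verify that \emph{none} of these blocks contributes a purely imaginary root precisely when $\mathrm{Re}\,\lambda_k>0$. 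I would route both difficulties through a single energy-type identity—multiplying the transverse equation by the conjugate transpose of a candidate imaginary-axis solution and separating real and imaginary parts—so that $D,R\ge 0$ and assumption~\eqref{eqn:obs} do the decisive work, exactly as the kernel-extraction trick $X\ge0,\ \xi^*X\xi=0\Rightarrow X\xi=0$ was used repeatedly in the proof of Lemma~\ref{lem:Gamma}.
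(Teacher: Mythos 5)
Your opening reduction is where the proposal breaks. You cannot eliminate the interior nodes in the time domain to obtain a closed loop of the form $(I_q\otimes M)\ddot X+(\mathrm{Re}\,\Gamma\otimes BB^T)\dot X+(\mathrm{Im}\,\Gamma\otimes BB^T)X+(I_q\otimes K)X=0$. The interior constraints (\ref{eqn:coupling}b) are differential, not algebraic, so the interior variables $g$ carry state and memory; a genuine frequency-domain elimination at frequency $\omega$ would produce the frequency-\emph{dependent} matrix ${\rm schur}\,(j\omega D+R,\,q)$, not the frequency-independent $\Gamma={\rm schur}\,(D+jR,\,q)$, whose $j$ is a bookkeeping fusion of the two Laplacians rather than a phasor variable. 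Your identification $\Lambda=\mathrm{Im}\,\Gamma$ is likewise false in general: by the paper's Remark, $\Gamma=[U_1+jU_2]-[V_1+jV_2][W_1+jW_2]^{+}[V_1+jV_2]^T$, and the complex pseudoinverse mixes real and imaginary parts, whereas $\Lambda=U_2-V_2W_2^{+}V_2^T$. Even in the case $p=q$ with no interior nodes, the true closed loop is $(I_q\otimes M)\ddot x+(D\otimes BB^T)\dot x+\bigl(I_q\otimes K+R\otimes BB^T\bigr)x=0$, and since $D$ and $R$ need not commute, eigenvectors of $D+jR$ do not simultaneously diagonalize the damping and stiffness couplings; so the per-eigenvalue pencil $(K-\omega^2M)\xi+(j\omega\alpha+\beta)BB^T\xi=0$ that your modal analysis rests on is not satisfied by the dynamics. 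The Jordan-block worry you flag is real but secondary to this structural failure.

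The mechanism that makes the frequency-independent $\Gamma$ the right test object is different, and it is the idea your proposal is missing: dissipation must die out first. The paper shows via the energy function $W$ that $D\dot z(t)\to 0$ along every solution, so all persistent motion lives on the conservative constraint set \eqref{eqn:arrayss}, where the coupling acts only through $\Lambda$ and solutions are finite sums of sinusoids; failure of synchronization is then equivalent (Lemma~\ref{lem:ss2}) to existence of a single nontrivial mode $(\omega,\bar x,\bar g)$ satisfying the algebraic system \eqref{eqn:arraystatic}. The link to $\Gamma$ never diagonalizes anything: on an imaginary-axis eigenvalue $j\mu$, the quadratic form $\zeta^*[D+jR]\zeta=j\mu$ with $D,R\geq 0$ \emph{splits}, giving $D\zeta=0$ and $R\zeta=[\mu\bar y^T\ 0]^T$ exactly --- which is why the $\omega$-scaling of the dissipative part never enters. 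Your sufficiency-direction energy identity is in this spirit, but it is hung on the nonexistent decomposition; and your necessity direction still owes the actual construction: an $\omega>0$ making $K-\omega^2M+\mu BB^T$ singular (available since $M,K>0$ and $\mu\geq 0$), an $\eta$ in its null space with $B^T\eta=1$ via \eqref{eqn:obs}, and an eigenvector of $\Gamma$ outside ${\rm span}\,\{\one_q\}$ even when the zero eigenvalue is repeated --- which the paper extracts from complex symmetry by the contradiction $0=(\Gamma\one_q)^Tv=\one_q^T\Gamma v=q$ for a putative Jordan chain, precisely the defectiveness issue you name but leave unresolved.
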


In the remainder of the paper we establish Theorem~\ref{thm:main}.
We do this in two steps, each expounded in a separate section.
First, we discover the constraints (that are more stringent than
\eqref{eqn:coupling}) to which the steady-state solutions of the
array~\eqref{eqn:oscillator} are subject. And from those constraints
we obtain conditions for synchronization in the form of algebraic
equations (as opposed to the original differential equations).
Second, we make a connection between those algebraic constraints and
the second eigenvalue of the matrix $\Gamma$ and therefore prove the
main result.

\section{Steady-state solutions}

Imagine the coupled pendulums in Fig.~\ref{fig:pendula3} in motion.
Since there is no external source of energy input to this system,
the total mechanical energy trapped in it can never increase. On the
contrary, due to the presence of dampers, some of this energy should
gradually leak out of the assembly as heat. This means that in the
long run the overall system should settle into a constant energy
state, the {\em steady state}. Now, it is not difficult to see that
if in the steady state the oscillators are not in synchrony then
they never will be. This simple observation is what guides us in the
analysis to follow.

Consider the array~\eqref{eqn:oscillator} under the
coupling~\eqref{eqn:coupling}. For our analysis let us construct the
aggregate vectors
\begin{eqnarray*}
x=\left[\begin{array}{c}x_{1}\\ x_{2}\\ \vdots\\
x_{q}\end{array}\right]\,,\quad u=\left[\begin{array}{c}u_{1}\\ u_{2}\\ \vdots\\
u_{q}\end{array}\right]\,,\quad y=\left[\begin{array}{c}y_{1}\\ y_{2}\\ \vdots\\
y_{q}\end{array}\right]\,,\quad z=\left[\begin{array}{c}z_{1}\\ z_{2}\\ \vdots\\
z_{p}\end{array}\right]\,,\quad g=\left[\begin{array}{c}z_{q+1}\\ z_{q+2}\\ \vdots\\
z_{p}\end{array}\right]\,.
\end{eqnarray*}
Note that $z=[y^{T}\ g^{T}]^{T}$ and $y=[I_{q}\otimes B^{T}]x$ where
$\otimes$ denotes the Kronecker product. Let $\|x\|_{\rm s}$ denote
the {\em distance of $x$ to the synchronization subspace}. Namely,
$\|x\|_{\rm s}=\|[(I_{q}-q^{-1}\one_{q}\one_{q}^{T})\otimes
I_{n}]x\|$, where $[I_{q}-q^{-1}\one_{q}\one_{q}^{T}]$ is no other
than the orthogonal projection matrix that projects onto $({\rm
range}\,\one_{q})^{\perp}={\rm null}\,\one_{q}^{T}$. Note that
$\|x\|_{\rm s}=0$ means $\|x_{i}-x_{j}\|=0$ for all $i,\,j$.

Let us now compactly reexpress the coupled
dynamics~\eqref{eqn:oscillator} and \eqref{eqn:coupling} as
\begin{subeqnarray}\label{eqn:array}
[I_{q}\otimes M]{\ddot x}+[I_{q}\otimes K]x&=&[I_{q}\otimes B]u\,,\quad y = [I_{q}\otimes B^{T}]x\\[0.5ex]
\left[\begin{array}{c}u\\
0\end{array}\right]+D\underbrace{\left[\begin{array}{c}{\dot
y}\\{\dot g}
\end{array}\right]}_{\displaystyle \dot z}+R\underbrace{\left[\begin{array}{c}y \\ g
\end{array}\right]}_{\displaystyle z}&=&0
\end{subeqnarray}
where the Laplacian matrices $D,\,R$ are defined in
\eqref{eqn:DandR}. To gain some understanding on the qualitative
behavior of the trajectories resulting from the dynamics
\eqref{eqn:array}, construct the following nonnegative function
\begin{eqnarray*}
W(t)=\frac{1}{2}x(t)^{T}[I_{q}\otimes K]x(t)+\frac{1}{2}{\dot
x(t)}^{T}[I_{q}\otimes M]{\dot x(t)}+\frac{1}{2}z(t)^{T}Rz(t)\,.
\end{eqnarray*}
The time derivative of this function along the solutions of
\eqref{eqn:array} reads
\begin{eqnarray}\label{eqn:Dzdot}
\frac{d}{dt}W(t)=-{\dot z}(t)^{T}D{\dot z}(t)
\end{eqnarray}
which allows us to claim ${\dot W}\leq 0$ because $D$ is symmetric
positive semidefinite. Being nonnegative, $W$ is lower bounded. From
this the convergence ${\dot W}\to 0$ follows. Then, since $D\geq 0$,
by \eqref{eqn:Dzdot} we can write
\begin{eqnarray}\label{eqn:limit}
\lim_{t\to\infty} D{\dot z}(t)=0\,.
\end{eqnarray}
The existence of the limit~\eqref{eqn:limit} implies that every
trajectory $x(t)$ evolving in accordance with \eqref{eqn:array} must
converge to a {\em steady-state} trajectory $x_{\rm ss}(t)$ that is
subject to
\begin{subeqnarray}\label{eqn:arrayss}
[I_{q}\otimes M]{\ddot x}+[I_{q}\otimes K]x&=&[I_{q}\otimes B]u\,,\quad y = [I_{q}\otimes B^{T}]x\\[0.5ex]
\left[\begin{array}{c}u\\
0\end{array}\right]+R\left[\begin{array}{c}y \\ g
\end{array}\right]&=&0\\[0.5ex]
D\left[\begin{array}{c}{\dot y}\\{\dot g}
\end{array}\right]&=&0
\end{subeqnarray}
for some $g(t)$. Moreover, it is clear that every $(x(t),\,g(t))$
pair satisfying \eqref{eqn:arrayss} automatically satisfies
\eqref{eqn:array}. Hence we have established:

\begin{lemma}\label{lem:ss}
The array~\eqref{eqn:oscillator} coupled through the
constraints~\eqref{eqn:coupling} synchronizes if and only if the
array~\eqref{eqn:arrayss} synchronizes, i.e., all solutions satisfy
$\|x(t)\|_{\rm s}\to 0$ as $t\to\infty$.
\end{lemma}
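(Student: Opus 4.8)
The plan is to deduce the claimed equivalence from two structural facts relating the solution families of \eqref{eqn:array} and \eqref{eqn:arrayss}: (a) every solution of \eqref{eqn:arrayss} is a solution of \eqref{eqn:array}, and (b) every solution of \eqref{eqn:array} approaches, in $x$ and $\dot x$, some solution of \eqref{eqn:arrayss}. Fact (a) is immediate, since under $D\dot z=0$ the coupling row of \eqref{eqn:arrayss} reads $[u^{T}\ 0]^{T}+Rz=0$, and adding the vanishing term $D\dot z$ reproduces the coupling row of \eqref{eqn:array} verbatim. Granting (a) and (b), both implications are routine: if \eqref{eqn:array} synchronizes then, by (a), so does its subfamily \eqref{eqn:arrayss}; conversely, for a solution $x$ of \eqref{eqn:array} with companion $x_{\rm ss}$ furnished by (b), the estimate $\|x\|_{\rm s}\le\|x-x_{\rm ss}\|+\|x_{\rm ss}\|_{\rm s}$ together with $\|x-x_{\rm ss}\|\to0$ and (assuming \eqref{eqn:arrayss} synchronizes) $\|x_{\rm ss}\|_{\rm s}\to0$ yields $\|x\|_{\rm s}\to0$.

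Thus the entire content lies in establishing (b). First I would exploit the energy function $W$ already introduced. Because $M,K>0$, its first two terms dominate $\tfrac12\lambda_{\min}(K)\|x\|^{2}+\tfrac12\lambda_{\min}(M)\|\dot x\|^{2}$, so the monotone decay $\dot W=-\dot z^{T}D\dot z\le0$ keeps $x$ and $\dot x$ bounded for all $t$; being bounded below and nonincreasing, $W$ converges, whence $\dot W\to0$ and, since $D\ge0$, the limit $D\dot z(t)\to0$ recorded in \eqref{eqn:limit}. It then remains to upgrade this asymptotic condition into convergence of the full trajectory to the steady-state set. Since \eqref{eqn:array} is linear and time-invariant, I would decompose $x(t)$ into its modal components: boundedness rules out any mode of positive real part and forbids polynomial growth on the imaginary axis, so $x(t)=x_{\rm osc}(t)+x_{\rm dec}(t)$ with $x_{\rm dec}\to0$ and $x_{\rm osc}$ a sum of purely oscillatory modes. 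Feeding this into $D\dot z\to0$, and using that a nonzero sum of distinct purely imaginary frequencies cannot tend to zero, forces $D\dot z_{\rm osc}\equiv0$; hence $x_{\rm osc}$ obeys the reduced coupling of \eqref{eqn:arrayss} and is the sought $x_{\rm ss}$, with $\|x-x_{\rm ss}\|=\|x_{\rm dec}\|\to0$.

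The step I expect to be the real obstacle is precisely this passage from $D\dot z\to0$ to membership of the limiting trajectory in the steady-state family, complicated by two features. First, the interior-node constraints make \eqref{eqn:array} a differential--algebraic system rather than a plain ODE, so one must either resolve the algebraic relation for $g$ or argue directly on the modal level as above. Second, $W$ controls $z$ only through $z^{T}Rz$ and $R\one_{p}=0$, so the interior variable $g$ need not be bounded, which obstructs a naive application of LaSalle's invariance principle to the full state. I would circumvent both by working only with the $x$-dynamics---fully bounded because $M,K>0$---and by noting that the synchronization measure $\|x\|_{\rm s}$ ignores the common mode, annihilated by $(I_{q}-q^{-1}\one_{q}\one_{q}^{T})\otimes I_{n}$, and never sees the interior variable $g$ at all; hence neither the possible drift of $g$ nor the bounded common-mode motion of $x$ can affect the conclusion.
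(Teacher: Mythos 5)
Your proposal is correct and takes essentially the same route as the paper: the same energy function $W$, the same monotonicity $\frac{d}{dt}W=-\dot z^{T}D\dot z\le 0$ yielding the limit $D\dot z(t)\to 0$ of \eqref{eqn:limit}, and the same two-way correspondence between solutions of \eqref{eqn:array} and \eqref{eqn:arrayss}. If anything, you give more detail than the paper at the one delicate point---the paper simply asserts that \eqref{eqn:limit} forces every trajectory to converge to a steady-state trajectory obeying \eqref{eqn:arrayss}, whereas you justify this via boundedness of $(x,\dot x)$ and a modal decomposition, correctly flagging the differential--algebraic structure and the possibly unbounded interior variable $g$ as the issues to be handled.
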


Let us now focus on the dynamics~\eqref{eqn:arrayss}. Recall $R\geq
0$ and  $\Lambda={\rm schur}\,(R,\,q)\geq 0$, the latter by
Lemma~\ref{lem:Lambda}. Using $R^{T}=R$, $\Lambda^{T}=\Lambda$, and
(\ref{eqn:arrayss}b) we can write
\begin{eqnarray}\label{eqn:baru}
u=[I_{q}\ \ E^{T}]\left[\begin{array}{c}u\\
0\end{array}\right]=-[I_{q}\ \ E^{T}]R\left[\begin{array}{c}y\\
g\end{array}\right]=-[\Lambda\ \ 0]\left[\begin{array}{c}y\\
g\end{array}\right]=-\Lambda y
\end{eqnarray}
where $E\in\Complex^{(p-q)\times q}$ satisfies \eqref{eqn:lamdef}.
This allows us to proceed as
\begin{eqnarray*}
[I_{q}\otimes B]u =-[I_{q}\otimes B]\Lambda y =-[I_{q}\otimes
B]\Lambda [I_{q}\otimes B^{T}]x =-[\Lambda\otimes BB^{T}]x\,.
\end{eqnarray*}
Then (\ref{eqn:arrayss}a) yields
\begin{eqnarray}\label{eqn:arnold}
[I_{q}\otimes M]{\ddot x}+\left([I_{q}\otimes K]+[\Lambda\otimes
BB^{T}]\right)x=0\,.
\end{eqnarray}
Since $M,\,K>0$ and $\Lambda,\,BB^{T}\geq 0$ the matrices
$[I_{q}\otimes M]$ and $\left([I_{q}\otimes K]+[\Lambda\otimes
BB^{T}]\right)$ are both symmetric positive definite. Hence the
solution to \eqref{eqn:arnold} has the form \cite[\S 23]{arnold89}
\begin{eqnarray}\label{eqn:form}
x(t)={\rm Re}\,\sum_{k=1}^{N} e^{j\omega_{k}t}\xi_{k}
\end{eqnarray}
where $N\leq qn$, the positive scalars $\omega_{k}$ are distinct,
and each $\xi_{k}\in(\Complex^n)^{q}$ satisfies
\begin{eqnarray*}
\left([I_{q}\otimes (K-\omega_{k}^{2}M)]+[\Lambda\otimes
BB^{T}]\right)\xi_{k}=0\,.
\end{eqnarray*}
Now, suppose that the array~\eqref{eqn:arrayss} fails to
synchronize. This implies that there exists a pair $(x(t),\,g(t))$
satisfying \eqref{eqn:arrayss}, where $x(t)$ also satisfies
\eqref{eqn:form} with $\xi_{k}\notin{\rm range}\,[\one_{q}\otimes
I_{n}]$ for some $k$. Using this particular index $k$ let us define
${\bar x}=\xi_{k}$ and $\omega=\omega_{k}$. Since the dynamics are
LTI, there should also exist a pair $({\hat x}(t),\,{\hat g}(t))$
satisfying \eqref{eqn:arrayss} such that ${\hat x}(t) = {\rm
Re}\,(e^{j\omega t}{\bar x})$ and ${\hat g}(t) = {\rm
Re}\,(e^{j\omega t}{\bar g})$ for some ${\bar g}\in\Complex^{p-q}$.
A straightforward implication of this on the triple $(\omega,\,{\bar
x},\,{\bar g})$ is then the following set of constraints
\begin{subeqnarray}\label{eqn:arraystatic}
[I_{q}\otimes(K-\omega^{2}M)]{\bar x}&=&[I_{q}\otimes B]{\bar u}\,,\quad {\bar y} = [I_{q}\otimes B^{T}]{\bar x}\\[0.5ex]
\left[\begin{array}{c}{\bar u}\\
0\end{array}\right]+R\left[\begin{array}{c}{\bar y} \\ {\bar g}
\end{array}\right]&=&0\\[0.5ex]
D\left[\begin{array}{c}{\bar y}\\{\bar g}
\end{array}\right]&=&0
\end{subeqnarray}
where ${\bar u}=-\Lambda{\bar y}$ by \eqref{eqn:baru}.

Conversely, if we can find $\omega\in\Real_{>0}$, ${\bar
x}\in(\Complex^{n})^{q}$ with ${\bar x}\notin{\rm
range}\,[\one_{q}\otimes I_{n}]$, and ${\bar g}\in\Complex^{p-q}$
satisfying \eqref{eqn:arraystatic}, we can claim that the
array~\eqref{eqn:arrayss} fails to synchronize. The reason is that
we can construct the functions ${\hat x}(t) = {\rm Re}\,(e^{j\omega
t}{\bar x})$ and ${\hat g}(t) = {\rm Re}\,(e^{j\omega t}{\bar g})$
which satisfy the dynamics~\eqref{eqn:arrayss} thanks to
\eqref{eqn:arraystatic}. And since $\|{\hat x}(t)\|_{\rm s}\not\to
0$ (because ${\bar x}\notin{\rm range}\,[\one_{q}\otimes I_{n}]$)
our claim holds. To summarize:

\begin{lemma}\label{lem:ss2}
The array~\eqref{eqn:arrayss} fails to synchronize if and only if
there exist $\omega\in\Real_{>0}$, ${\bar x}\in(\Complex^{n})^{q}$
with ${\bar x}\notin{\rm range}\,[\one_{q}\otimes I_{n}]$, and
${\bar g}\in\Complex^{p-q}$ satisfying \eqref{eqn:arraystatic}.
\end{lemma}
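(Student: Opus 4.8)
The plan is to establish the two implications separately, using the normal-mode decomposition \eqref{eqn:form} of the reduced conservative dynamics \eqref{eqn:arnold} that has already been derived. Recall that, through the Schur-complement identity $u=-\Lambda y$ of \eqref{eqn:baru}, the $x$-component of every steady-state trajectory obeys \eqref{eqn:arnold}, a second-order LTI system with symmetric positive definite mass $[I_{q}\otimes M]$ and stiffness $[I_{q}\otimes K]+[\Lambda\otimes BB^{T}]$; hence $x(t)$ is a finite sum of harmonics with distinct positive frequencies $\omega_{k}$ and mode shapes $\xi_{k}\in{\rm null}\,([I_{q}\otimes(K-\omega_{k}^{2}M)]+[\Lambda\otimes BB^{T}])$. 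I would phrase the whole argument around the equivalence: \eqref{eqn:arrayss} fails to synchronize if and only if some admissible mode shape lies outside the synchronization subspace ${\rm range}\,[\one_{q}\otimes I_{n}]$, since $\|x\|_{\rm s}$ measures exactly the projection onto its orthogonal complement.

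For sufficiency (the ``if'' direction) I would argue by direct construction. Given $(\omega,\,\bar x,\,\bar g)$ satisfying \eqref{eqn:arraystatic} with $\bar x\notin{\rm range}\,[\one_{q}\otimes I_{n}]$, set $\hat x(t)={\rm Re}\,(e^{j\omega t}\bar x)$, $\hat g(t)={\rm Re}\,(e^{j\omega t}\bar g)$, and $\hat u(t)={\rm Re}\,(e^{j\omega t}\bar u)$ with $\bar u=-\Lambda\bar y$. Two time-differentiations replace $\tfrac{d^{2}}{dt^{2}}$ by $-\omega^{2}$, so (\ref{eqn:arraystatic}a) delivers (\ref{eqn:arrayss}a); as $R$ is real, (\ref{eqn:arrayss}b) is the real part of $e^{j\omega t}$ times (\ref{eqn:arraystatic}b) and thus holds for all $t$; and the damper constraint (\ref{eqn:arrayss}c) follows from $D\dot{\hat z}(t)=-\omega\,{\rm Im}\,(e^{j\omega t}D[\bar y^{T}\ \bar g^{T}]^{T})=0$ by (\ref{eqn:arraystatic}c). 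Since $\bar x$ has a nonzero component off the synchronization subspace, $\|\hat x(t)\|_{\rm s}\not\to 0$, so synchronization fails. This direction is mechanical.

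For necessity (the ``only if'' direction) I would start from a non-synchronizing solution $(x(t),\,g(t))$ and invoke \eqref{eqn:form} to select an index $k$ with $\xi_{k}\notin{\rm range}\,[\one_{q}\otimes I_{n}]$ --- such a $k$ must exist, for otherwise every mode would be synchronous and $\|x\|_{\rm s}\to 0$. Putting $\bar x=\xi_{k}$ and $\omega=\omega_{k}$, the remaining task is to exhibit a companion $\bar g$ making the single-frequency triple satisfy all three relations of \eqref{eqn:arraystatic}. Here I would use frequency separation: $y=[I_{q}\otimes B^{T}]x$ is a superposition of the distinct harmonics $e^{j\omega_{k}t}$, the block partition \eqref{eqn:blocks} reduces (\ref{eqn:arrayss}b) to $W_{2}g=-V_{2}^{T}y$ together with $u=-\Lambda y$, and so $g(t)$ may likewise be taken harmonic in the same frequencies; the distinctness of the $\omega_{k}$ then forces each frequency component of $(x,\,g)$ to satisfy the time-frozen constraints (b) and (c) on its own. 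Extracting the $\omega_{k}$-component yields the desired $\bar g$, and \eqref{eqn:arraystatic} is simply \eqref{eqn:arrayss} read off at the single mode $e^{j\omega t}$. I expect this single-mode extraction to be the crux: one must ensure that the isolated harmonic still respects the damper constraint $D\dot z=0$ and admits a consistent interior-node profile $\bar g$. The cleanest discharge is to lean on linearity and the distinctness of the frequencies to decouple the constraints mode by mode, equivalently to appeal to LTI time-invariance so that a single harmonic of a valid response is again a valid response.
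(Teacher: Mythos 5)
Your proposal is correct and takes essentially the same route as the paper: sufficiency by directly constructing the harmonic pair $\hat x(t)={\rm Re}\,(e^{j\omega t}\bar x)$, $\hat g(t)={\rm Re}\,(e^{j\omega t}\bar g)$ and noting $\|\hat x(t)\|_{\rm s}\not\to 0$, and necessity by picking an offending mode $\xi_{k}\notin{\rm range}\,[\one_{q}\otimes I_{n}]$ from the decomposition \eqref{eqn:form} and invoking linearity and time-invariance to obtain a single-frequency solution together with a companion $\bar g$. If anything, your frequency-separation remark via the block reduction $W_{2}g=-V_{2}^{T}y$ spells out the single-mode extraction slightly more explicitly than the paper, which simply asserts it ``since the dynamics are LTI.''
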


\section{The second eigenvalue}

In this section we conduct the last part of our analysis, thereby
completing the preparation for the proof of Theorem~\ref{thm:main}.
Namely, we show that the condition on the triple $(\omega,\,{\bar
x},\,{\bar g})$ presented in Lemma~\ref{lem:ss2} is equivalent to a
condition on the second eigenvalue of the matrix $\Gamma={\rm
schur}\,(D+jR,\,q)$. We establish this equivalence in two lemmas;
one for necessity, the other for sufficiency.

\begin{lemma}\label{lem:lam2sync}
If ${\rm Re}\,\lambda_{2}(\Gamma)\leq 0$ then there exist
$\omega\in\Real_{>0}$, ${\bar x}\in(\Complex^{n})^{q}$ with ${\bar
x}\notin{\rm range}\,[\one_{q}\otimes I_{n}]$, and ${\bar
g}\in\Complex^{p-q}$ satisfying \eqref{eqn:arraystatic}.
\end{lemma}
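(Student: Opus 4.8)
The plan is to establish the statement constructively: from ${\rm Re}\,\lambda_{2}(\Gamma)\leq 0$ I will produce an eigenvector of $\Gamma$ whose eigenvalue lies on the imaginary axis and whose direction is \emph{not} along $\one_{q}$, and then lift it to a triple $(\omega,\,{\bar x},\,{\bar g})$ solving \eqref{eqn:arraystatic}. First I record what the hypothesis yields. By Lemma~\ref{lem:Gamma} we have ${\rm Re}\,\lambda_{1}(\Gamma)\geq 0$, ${\rm Im}\,\lambda_{k}(\Gamma)\geq 0$, and $\Gamma\one_{q}=0$; combining these with ${\rm Re}\,\lambda_{2}(\Gamma)\leq 0$ and the ordering ${\rm Re}\,\lambda_{1}(\Gamma)\leq{\rm Re}\,\lambda_{2}(\Gamma)$ forces ${\rm Re}\,\lambda_{1}(\Gamma)={\rm Re}\,\lambda_{2}(\Gamma)=0$. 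Hence $\Gamma$ carries at least two eigenvalues (algebraic multiplicity counted) of the form $j\mu$ with $\mu\geq 0$ on the imaginary axis, one of which is $0$ with eigenvector $\one_{q}$.

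The next step is to extract a usable eigenvector $v$, namely one with $\Gamma v=j\mu v$ for some $\mu\geq 0$ and $v\notin{\rm span}\,\one_{q}$. If $\Gamma$ has a \emph{nonzero} eigenvalue $j\mu$ on the imaginary axis, any associated eigenvector $v$ is automatically independent of $\one_{q}$ (eigenvectors for distinct eigenvalues are linearly independent), so $v\notin{\rm span}\,\one_{q}$ and $\mu>0$. The remaining possibility is that the only imaginary-axis eigenvalue is $0$, now necessarily of algebraic multiplicity at least two. I expect this to be the main obstacle: since $\Gamma$ is complex \emph{symmetric} but not Hermitian, it need not be diagonalizable, and a priori it could carry a Jordan block at $0$ whose eigenspace is the single line ${\rm span}\,\one_{q}$. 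I plan to rule this out by a pairing argument. A length-two Jordan chain would furnish $w$ with $\Gamma w=\one_{q}$, whence $[D+jR][w^{T}\ (Ew)^{T}]^{T}=[(\Gamma w)^{T}\ 0]^{T}=[\one_{q}^{T}\ 0]^{T}$ for $E$ as in \eqref{eqn:gamdef}; left-multiplying by $\one_{p}^{T}$ and using $[D+jR]\one_{p}=0$ together with the symmetry of $[D+jR]$ gives $0=\one_{q}^{T}\one_{q}=q$, a contradiction. Thus $0$ is semisimple, its eigenspace has dimension at least two, and there is an eigenvector $v\notin{\rm span}\,\one_{q}$ with $\Gamma v=0$ (the case $\mu=0$).

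With $v$ in hand I lift it. Set $\zeta=[v^{T}\ (Ev)^{T}]^{T}$, so that $[D+jR]\zeta=[(\Gamma v)^{T}\ 0]^{T}=[(j\mu v)^{T}\ 0]^{T}$. As in the eigenvalue computation of Lemma~\ref{lem:Gamma}, $\zeta^{*}[D+jR]\zeta=j\mu\,v^{*}v$, whose vanishing real part forces $\zeta^{*}D\zeta=0$ and hence $D\zeta=0$ since $D\geq 0$; because $D\zeta=0$, the identity $[D+jR]\zeta=[(j\mu v)^{T}\ 0]^{T}$ then reduces to $R\zeta=[(\mu v)^{T}\ 0]^{T}$. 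Now put ${\bar y}=v$, ${\bar g}=Ev$, and ${\bar u}=-\mu v$. Constraint (\ref{eqn:arraystatic}c) reads $D[{\bar y}^{T}\ {\bar g}^{T}]^{T}=D\zeta=0$, and (\ref{eqn:arraystatic}b) reads $[{\bar u}^{T}\ 0]^{T}+R\zeta=[({\bar u}+\mu v)^{T}\ 0]^{T}=0$, both holding by construction.

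It remains to satisfy (\ref{eqn:arraystatic}a) by choosing $\omega>0$ and ${\bar x}=v\otimes w$ for a suitable $w\in\Complex^{n}$, which decouples across the $q$ blocks into $(K-\omega^{2}M)w=-\mu B$ and $B^{T}w=1$. If $\mu>0$ I set $w=-\mu(K-\omega^{2}M)^{-1}B$ and reduce $B^{T}w=1$ to the scalar equation $B^{T}(K-\omega^{2}M)^{-1}B=-1/\mu$; writing this rational function of $\omega^{2}$ in partial fractions as $\sum_{k}c_{k}/(\omega_{k}^{2}-\omega^{2})$ over the generalized eigenvalues $\omega_{k}^{2}$ of $(K,M)$, which are simple with $c_{k}>0$ by the observability hypothesis \eqref{eqn:obs}, it is strictly increasing between its poles and sweeps all of $(-\infty,0)$ beyond the largest pole, hence attains $-1/\mu$ at some admissible $\omega>0$. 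If $\mu=0$ I instead take $\omega$ to be any generalized eigenvalue of $(K,M)$ and $w=\phi/(B^{T}\phi)$, where $\phi$ spans the kernel of $K-\omega^{2}M$ (one-dimensional, with $B^{T}\phi\neq 0$, by \eqref{eqn:obs}). In either case $w\neq 0$, so ${\bar x}=v\otimes w\notin{\rm range}\,[\one_{q}\otimes I_{n}]$ because $v\notin{\rm span}\,\one_{q}$, and $(\omega,\,{\bar x},\,{\bar g})$ satisfies \eqref{eqn:arraystatic}, completing the construction.
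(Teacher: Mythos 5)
Your proposal is correct and follows essentially the same route as the paper: the same extraction of an imaginary-axis eigenvector $v\notin{\rm span}\,\{\one_{q}\}$ (including the identical generalized-eigenvector contradiction $0=\one_{q}^{T}\Gamma w=\one_{q}^{T}\one_{q}=q$, which you run at the level of $[D+jR]$ and $\one_{p}$ rather than directly on $\Gamma$), the same lifting $\zeta=[v^{T}\ (Ev)^{T}]^{T}$ yielding $D\zeta=0$ and $R\zeta=[(\mu v)^{T}\ 0]^{T}$ with ${\bar u}=-\mu v$, and the same tensor ansatz ${\bar x}=v\otimes w$ with $(K-\omega^{2}M)w=-\mu B$ and $B^{T}w=1$. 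The only divergence is the final existence step: the paper takes $\omega^{2}$ to be any generalized eigenvalue of the positive definite pencil $(K+\mu BB^{T},\,M)$, so that $[K-\omega^{2}M+\mu BB^{T}]$ is singular and observability~\eqref{eqn:obs} forces a kernel vector with $B^{T}\eta\neq 0$, handling $\mu>0$ and $\mu=0$ uniformly; you instead solve the equivalent secular equation $B^{T}(K-\omega^{2}M)^{-1}B=-1/\mu$ by partial fractions and monotonicity, with a separate $\mu=0$ case --- correct, but more laborious than the paper's one-line pencil observation.
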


\begin{proof}
Let ${\rm Re}\,\lambda_{2}(\Gamma)\leq 0$. By Lemma~\ref{lem:Gamma}
we have $\Gamma \one_{q}=0$ (i.e., $\Gamma$ has an eigenvalue at the
origin) and ${\rm Re}\,\lambda_{1}(\Gamma)\geq 0$. Hence, without
loss of generality, we can let $\lambda_{1}(\Gamma)=0$. Note that
${\rm Re}\,\lambda_{1}(\Gamma)\geq 0$ implies ${\rm
Re}\,\lambda_{k}(\Gamma)\geq 0$ for all $k$. This yields ${\rm
Re}\,\lambda_{2}(\Gamma)=0$ because ${\rm
Re}\,\lambda_{2}(\Gamma)\leq 0$. Lemma~\ref{lem:Gamma} also tells us
that ${\rm Im}\,\lambda_{2}(\Gamma)\geq 0$. Therefore ${\rm
Re}\,\lambda_{2}(\Gamma)=j\mu$ for some real number $\mu\geq 0$.
Choose now a unit vector ${\bar y}\in\Complex^{q}$ satisfying ${\bar
y}\notin{\rm span}\,\{\one_{q}\}$ and $\Gamma{\bar y}=\mu{\bar y}$.
Such eigenvector exists even if $\mu=0$ (i.e., the eigenvalue at the
origin is repeated) because $\Gamma^{T}=\Gamma$ by
Lemma~\ref{lem:Gamma}. To see that suppose otherwise, i.e.,
$\one_{q}$ were the only eigenvector for the repeated eigenvalue at
the origin. But then there would exist a generalized eigenvector
$v\in\Complex^{q}$ satisfying $\Gamma v=\one_{q}$ and we would have
the following contradiction:
$0=(\Gamma\one_{q})^{T}v=\one_{q}^{T}\Gamma
v=\one_{q}^{T}\one_{q}=q$. Let now $E\in\Complex^{(p-q)\times q}$
satisfy \eqref{eqn:gamdef}. Then define ${\bar g}=E{\bar y}$. By
\eqref{eqn:gamdef} we can write
\begin{eqnarray}\label{eqn:wuhuu}
[D+jR]\left[\begin{array}{c}{\bar y}\\ {\bar
g}\end{array}\right]=[D+jR]\left[\begin{array}{c}I_{q}\\
E\end{array}\right]{\bar y}=\left[\begin{array}{c}\Gamma\\
0\end{array}\right]{\bar y}=\left[\begin{array}{c}\Gamma{\bar y}\\
0\end{array}\right]=j\mu\left[\begin{array}{c}{\bar y}\\
0\end{array}\right]\,.
\end{eqnarray}
Let $\zeta=[{\bar y}^{T}\ \ {\bar g}^{T}]^{T}$. By \eqref{eqn:wuhuu}
we have
\begin{eqnarray*}
\zeta^{*}D\zeta+j\zeta^{*}R\zeta=\zeta^{*}[D+jR]\zeta=j\mu[{\bar
y}^{*}\
\ {\bar g}^{*}]\left[\begin{array}{c}{\bar y}\\
0\end{array}\right]=j\mu
\end{eqnarray*}
thanks to ${\bar y}^{*}{\bar y}=1$. Since $D,\,R\geq0$, both
$\zeta^{*}D\zeta$ and $\zeta^{*}R\zeta$ are real. This means
$\zeta^{*}D\zeta=0$ which in turn implies
\begin{eqnarray}\label{eqn:static3}
D\left[\begin{array}{c}{\bar y}\\{\bar g}
\end{array}\right]=0\,.
\end{eqnarray}
Now, by \eqref{eqn:wuhuu} and \eqref{eqn:static3} we have
\begin{eqnarray*}
R\left[\begin{array}{c}{\bar y}\\ {\bar g}\end{array}\right]=\left[\begin{array}{c}\mu{\bar y}\\
0\end{array}\right]\,.
\end{eqnarray*}
Hence, letting ${\bar u}=-\mu{\bar y}$ we can write
\begin{eqnarray}\label{eqn:static2}
\left[\begin{array}{c}{\bar u}\\
0\end{array}\right]+R\left[\begin{array}{c}{\bar y} \\ {\bar g}
\end{array}\right]=0\,.
\end{eqnarray}
Now, choose some $\omega>0$ for which $[K-\omega^{2}M+\mu BB^{T}]$
is singular. Then let $\eta\in{\rm null}\,[K-\omega^{2}M+\mu
BB^{T}]$ be a nonzero vector satisfying $B^{T}\eta=1$. Such $\eta$
exists (i.e., $\eta\notin{\rm null}\,B^{T}$) thanks to the
observability assumption~\eqref{eqn:obs}. Then construct ${\bar
x}=[{\bar y}\otimes\eta]$, which satisfies ${\bar x}\notin {\rm
range}\,[\one_{q}\otimes I_{n}]$ because ${\bar y}\notin{\rm
span}\,\{\one_{q}\}$. Observe that
\begin{eqnarray}\label{eqn:static1b}
{\bar y}=[{\bar y}\otimes B^{T}\eta]=[I_{q}\otimes B^{T}][{\bar
y}\otimes\eta]=[I_{q}\otimes B^{T}]{\bar x}\,.
\end{eqnarray}
Also, since $[K-\omega^{2}M+\mu BB^{T}]\eta=0$ and ${\bar
u}=-\mu{\bar y}$,
\begin{eqnarray}\label{eqn:static1a}
[I_{q}\otimes(K-\omega^{2}M)]{\bar x}
&=&[I_{q}\otimes(K-\omega^{2}M)][{\bar y}\otimes\eta]\nonumber\\
&=&[{\bar y}\otimes(K-\omega^{2}M)\eta]\nonumber\\
&=&[{\bar y}\otimes(-\mu BB^{T}\eta)]\nonumber\\
&=&[(-\mu{\bar y})\otimes B]\nonumber\\
&=&[{\bar u}\otimes B]\nonumber\\
&=&[I_{q}\otimes B]{\bar u}\,.
\end{eqnarray}
The result follows by \eqref{eqn:static3}, \eqref{eqn:static2},
\eqref{eqn:static1b}, and \eqref{eqn:static1a}.
\end{proof}

\begin{lemma}\label{lem:sync2lam}
If there exist $\omega\in\Real_{>0}$, ${\bar
x}\in(\Complex^{n})^{q}$ with ${\bar x}\notin{\rm
range}\,[\one_{q}\otimes I_{n}]$, and ${\bar g}\in\Complex^{p-q}$
satisfying \eqref{eqn:arraystatic} then ${\rm
Re}\,\lambda_{2}(\Gamma)\leq 0$.
\end{lemma}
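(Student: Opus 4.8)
The plan is to use the hypothesised triple $(\omega,\bar x,\bar g)$ to produce a second eigenvalue of $\Gamma$ on the imaginary axis, thereby forcing ${\rm Re}\,\lambda_{2}(\Gamma)=0$. Put $\bar y=[I_{q}\otimes B^{T}]\bar x$ and $\zeta=[\bar y^{T}\ \bar g^{T}]^{T}$, and recall $\bar u=-\Lambda\bar y$ from \eqref{eqn:baru}. First I would extract the algebraic content of the coupling. By (\ref{eqn:arraystatic}b) and (\ref{eqn:arraystatic}c) the vector $\zeta$ obeys $D\zeta=0$ and $R\zeta=-[\bar u^{T}\ 0]^{T}=[(\Lambda\bar y)^{T}\ 0]^{T}$, so $[D+jR]\zeta=j[(\Lambda\bar y)^{T}\ 0]^{T}$. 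Since the lower block vanishes, the computation behind \eqref{eqn:wuhuu} applies: writing $\zeta$ as the lift $[\bar y^{T}\ (E\bar y)^{T}]^{T}$ from \eqref{eqn:gamdef} plus a vector supported on the interior nodes, and using the conjugate of the inclusion \eqref{eqn:null} to discard the latter's contribution, one identifies the upper block of $[D+jR]\zeta$ with $\Gamma\bar y$. This gives the key identity $\Gamma\bar y=j\Lambda\bar y$.

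Next I would verify $\bar y\notin{\rm span}\,\{\one_{q}\}$, which is where observability first enters. If $\bar y=c\one_{q}$, then $\bar u=-c\Lambda\one_{q}=0$ by Lemma~\ref{lem:Lambda}, so (\ref{eqn:arraystatic}a) yields $(K-\omega^{2}M)\bar x_{i}=0$ and $B^{T}\bar x_{i}=c$ for every $i$; hence $\bar x_{i}-\bar x_{1}\in{\rm null}\,(K-\omega^{2}M)\cap{\rm null}\,B^{T}=\{0\}$ by \eqref{eqn:obs}. Then all $\bar x_{i}$ coincide, i.e. $\bar x\in{\rm range}\,[\one_{q}\otimes I_{n}]$, contrary to hypothesis.

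The crux, which I expect to be the main obstacle, is to promote $\Gamma\bar y=j\Lambda\bar y$ into an honest eigenvalue equation by showing that $\bar y$ is itself an eigenvector of $\Lambda$; the identity alone is too weak, and it is the oscillator dynamics (\ref{eqn:arraystatic}a) together with \eqref{eqn:obs} that furnish the missing rigidity. I would split on whether $K-\omega^{2}M$ is invertible. If it is, then $\bar x_{i}=(K-\omega^{2}M)^{-1}B\bar u_{i}$, so $\bar y=G(\omega)\bar u$ with the real scalar $G(\omega)=B^{T}(K-\omega^{2}M)^{-1}B$; combined with $\bar u=-\Lambda\bar y$ and $\bar y\neq0$ this forces $G(\omega)\neq0$ and $\Lambda\bar y=\nu\bar y$ with $\nu=-1/G(\omega)\geq0$. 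If $K-\omega^{2}M$ is singular, then \eqref{eqn:obs} makes ${\rm null}\,(K-\omega^{2}M)$ one-dimensional and transversal to ${\rm null}\,B^{T}$, so $(K-\omega^{2}M)\bar x_{i}=B\bar u_{i}$ is solvable only when $\bar u=0$; then $\Lambda\bar y=0$ and $\bar y$ is again a $\Lambda$-eigenvector, with $\nu=0$. In either case $\Lambda\bar y=\nu\bar y$ for some $\nu\geq0$, and the key identity then gives $\Gamma\bar y=j\nu\bar y$.

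Finally, since $\Gamma\one_{q}=0$ by Lemma~\ref{lem:Gamma} while $\Gamma\bar y=j\nu\bar y$ with $\bar y$ independent of $\one_{q}$, the matrix $\Gamma$ has two linearly independent eigenvectors whose eigenvalues $0$ and $j\nu$ both lie on the imaginary axis; thus at least two eigenvalues of $\Gamma$ (counted with multiplicity) have zero real part. As ${\rm Re}\,\lambda_{1}(\Gamma)\geq0$ by Lemma~\ref{lem:Gamma}, this yields ${\rm Re}\,\lambda_{2}(\Gamma)=0$, completing the proof. The only delicate point beyond bookkeeping is the singular case, where \eqref{eqn:obs} must be used twice — to bound $\dim{\rm null}\,(K-\omega^{2}M)$ and to extract $\bar u=0$.
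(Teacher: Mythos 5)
Your proof is correct and follows essentially the same route as the paper's: the same case split on the singularity of $K-\omega^{2}M$, the same scalar $G(\omega)=B^{T}[K-\omega^{2}M]^{-1}B$ (the paper's $\alpha$) producing the nonzero eigenvalue of $\Lambda$ in the nonsingular case, and the same use of \eqref{eqn:obs} to force $\bar u=0$ in the singular case, concluding with $\Gamma\one_{q}=0$ plus a second imaginary-axis eigenvalue. The only differences are organizational: you extract $\Gamma\bar y=j\Lambda\bar y$ once before splitting, via the decomposition $\zeta=[I_{q}\ \ E^{T}]^{T}\bar y+[0\ \ \delta^{T}]^{T}$ and the conjugated inclusion \eqref{eqn:null} (the paper instead gets $\Gamma \bar y$ within each case in one line from the symmetry identity $[I_{q}\ \ E^{T}][D+jR]=[\Gamma\ \ 0]$, which would also yield your key identity without the null-space argument), and you prove $\bar y\notin{\rm span}\,\{\one_{q}\}$ uniformly, which the paper only needs in the singular case since $j\mu\neq 0$ already separates the eigenvalues in the nonsingular one.
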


\begin{proof}
Let \eqref{eqn:arraystatic} holds for some triple $(\omega,\,{\bar
x},\,{\bar g})$ with ${\bar x}\notin {\rm range}\,[\one_{q}\otimes
I_{n}]$. We consider two cases.

{\em Case~1: $[K-\omega^{2}M]$ is singular.} Let
$\eta\in\Complex^{n}$ satisfy $[K-\omega^{2}M]\eta=0$ and
$B^{T}\eta=1$. Such $\eta$ exists by the observability
assumption~\eqref{eqn:obs}. Moreover, it is unique; for if it were
not then we could find $\tilde\eta\neq\eta$ satisfying
$[K-\omega^{2}M]\tilde\eta=0$ and $B^{T}\tilde\eta=1$ and construct
the nonzero vector $\hat\eta=\tilde\eta-\eta$ satisfying
$[K-\omega^{2}M]\hat\eta=0$ and $B^{T}\hat\eta=0$, which would
contradict \eqref{eqn:obs}. Since $[K-\omega^{2}M]$ real and
symmetric we have $\eta^{*}[K-\omega^{2}M]=0$ and $\eta^{*}B=1$.
Consider now \eqref{eqn:arraystatic}, where ${\bar u}$ must be zero
because $\|{\bar u}\|^{2}=[({\bar u}^{*}{\bar u})\otimes
(\eta^{*}B)]=[{\bar u}\otimes \eta]^{*}[I_{q}\otimes B]{\bar
u}=[{\bar u}\otimes \eta]^{*}[I_{q}\otimes(K-\omega^{2}M)]{\bar
x}=[{\bar u}^{*}\otimes (\eta^{*}(K-\omega^{2}M))]{\bar x}=0$. Hence
(\ref{eqn:arraystatic}a) becomes
\begin{eqnarray*}
[I_{q}\otimes(K-\omega^{2}M)]{\bar x}=0\,,\quad {\bar y} =
[I_{q}\otimes B^{T}]{\bar x}
\end{eqnarray*}
yielding (by the uniqueness of $\eta$ discussed above) the relation
${\bar x}=[{\bar y}\otimes \eta]$. Then, since ${\bar x}\notin {\rm
range}\,[\one_{q}\otimes I_{n}]$, we have to have ${\bar
y}\notin{\rm span}\,\{\one_{q}\}$. Under ${\bar u}=0$ we have by
\eqref{eqn:arraystatic}
\begin{eqnarray*}
D\left[\begin{array}{c}{\bar y}\\{\bar g}
\end{array}\right]=0\qquad\mbox{and}\qquad R\left[\begin{array}{c}{\bar y}\\{\bar g}
\end{array}\right]=0\,.
\end{eqnarray*}
Using this and the symmetry of the matrices $\Gamma,\,D,\,R$ we can
now proceed as
\begin{eqnarray*}
\Gamma{\bar y}=[\Gamma\ \ 0]\left[\begin{array}{c}{\bar y}\\
{\bar g}\end{array}\right]=[I_{q}\ \ E^{T}][D+jR]\left[\begin{array}{c}{\bar y}\\
{\bar g}\end{array}\right]=0
\end{eqnarray*}
where $E\in\Complex^{(p-q)\times q}$ satisfies \eqref{eqn:gamdef}.
Therefore ${\bar y}$ is an eigenvector of $\Gamma$ for the
eigenvalue at the origin. By Lemma~\ref{lem:Gamma} we also have
$\Gamma\one_{q}=0$. Since ${\bar y}\notin{\rm span}\,\{\one_{q}\}$,
this means that $\Gamma$ has at least two eigenvalues at the origin.
Hence $\lambda_{2}(\Gamma)\leq 0$.

{\em Case~2: $[K-\omega^{2}M]$ is nonsingular.} By \eqref{eqn:baru}
we have ${\bar u}=-\Lambda{\bar y}$. Then (\ref{eqn:arraystatic}a)
yields
\begin{eqnarray*}
\left([I_{q}\otimes(K-\omega^{2}M)]+[\Lambda\otimes
(BB^{T})]\right){\bar x}=0\,.
\end{eqnarray*}
Pre-multiplying this equation by $[I_{q}\otimes
(B^{T}[K-\omega^{2}M]^{-1})]$ and defining
$\alpha=B^{T}[K-\omega^{2}M]^{-1}B$ we obtain
\begin{eqnarray}\label{eqn:premultiply}
0&=&[I_{q}\otimes
(B^{T}[K-\omega^{2}M]^{-1})]\left([I_{q}\otimes(K-\omega^{2}M)]+[\Lambda\otimes
(BB^{T})]\right){\bar x}\nonumber\\
&=&\left([I_{q}\otimes B^{T}]+[\Lambda\otimes
(B^{T}[K-\omega^{2}M]^{-1}BB^{T})]\right){\bar x}\nonumber\\
&=&\left(I_{q}+[\Lambda\otimes
(B^{T}[K-\omega^{2}M]^{-1}B)]\right)[I_{q}\otimes B^{T}]{\bar x}\nonumber\\
&=&[I_{q}+\alpha\Lambda]{\bar y}\,.
\end{eqnarray}
Note that ${\bar y}$ cannot be zero; for otherwise we would have
${\bar u}=0$ because ${\bar u}=-\Lambda{\bar y}$. Then ${\bar u}=0$
and (\ref{eqn:arraystatic}a) would yield
$[I_{q}\otimes(K-\omega^{2}M)]{\bar x}=0$, which would require
$[K-\omega^{2}M]$ to be singular since ${\bar x}\neq 0$. Hence
${\bar y}\neq 0$ because $[K-\omega^{2}M]^{-1}$ exists. Since ${\bar
y}$ is nonzero, so is $\alpha$ by \eqref{eqn:premultiply}. Let
$\mu=-\alpha^{-1}$. Then \eqref{eqn:premultiply} yields
$\Lambda{\bar y}=\mu{\bar y}$, from which follows ${\bar
u}=-\Lambda{\bar y}=-\mu{\bar y}$ and we have by
\eqref{eqn:arraystatic}
\begin{eqnarray*}
D\left[\begin{array}{c}{\bar y}\\{\bar g}
\end{array}\right]=0\qquad\mbox{and}\qquad R\left[\begin{array}{c}{\bar y}\\{\bar g}
\end{array}\right]=\left[\begin{array}{c}{\mu\bar y}\\ 0
\end{array}\right]\,.
\end{eqnarray*}
Using this and the symmetry of the matrices $\Gamma,\,D,\,R$ we can
write
\begin{eqnarray*}
\Gamma{\bar y}=[\Gamma\ \ 0]\left[\begin{array}{c}{\bar y}\\
{\bar g}\end{array}\right]=[I_{q}\ \ E^{T}][D+jR]\left[\begin{array}{c}{\bar y}\\
{\bar g}\end{array}\right]=[I_{q}\ \ E^{T}]\left[\begin{array}{c}j\mu{\bar y}\\
0\end{array}\right]=j\mu{\bar y}\,.
\end{eqnarray*}
Therefore ${\bar y}$ is an eigenvector of $\Gamma$ for the
eigenvalue $j\mu$. Recall that $\mu$ is real and nonzero.  Also,
$\Gamma\one_{q}=0$. This means that $\Gamma$ has at least two
eigenvalues on the imaginary axis. Hence $\lambda_{2}(\Gamma)\leq
0$.
\end{proof}

\vspace{0.12in}

\noindent{\bf Proof of Theorem~\ref{thm:main}.} Combine
Lemma~\ref{lem:ss}, Lemma~\ref{lem:ss2}, Lemma~\ref{lem:lam2sync},
and Lemma~\ref{lem:sync2lam}. \hfill\null\hfill$\blacksquare$

\section{Conclusion}

In this paper we studied the problem of synchronization in an array
of identical linear oscillators (of arbitrary order) coupled through
a dynamic network with interior nodes. We showed that whether the
oscillators asymptotically synchronize is independent of the
oscillator parameters, and instead depends solely on the coupling
network represented by a complex-valued Laplacian matrix. Our
investigations on this coupling matrix revealed that the oscillators
asymptotically synchronize if and only if the Schur complement of
the Laplacian has a single eigenvalue on the imaginary axis, thus
generalizing a recent theorem on the special case of harmonic
oscillators.

\bibliographystyle{plain}
\bibliography{references}
\end{document}